\documentclass[12pt]{article}

\usepackage{latexsym}
\usepackage{graphicx}
\usepackage{enumitem}
\usepackage{amsmath}
\usepackage{amssymb}
\usepackage{amsthm}
\usepackage{multirow}
\usepackage{color}
\usepackage{url}
\newtheorem{theorem}{Theorem}[section]
\newtheorem{definition}[theorem]{Definition}

\newtheorem{lemma}[theorem]{Lemma}

\newtheorem{problem}{Problem}
\newtheorem{conjecture}[problem]{Conjecture}

\title{A proof of purely singular splitting conjecture}

\author{Ka Hin Leung$^{\text{a}}$ and  Tao Zhang$^{\text{b,}}$\thanks{Corresponding author.}\\
	\footnotesize $^{\text{a}}$ Department of Mathematics, National University of Singapore, Kent Ridge, Singapore 119260, Republic of Singapore. \\
    \footnotesize $^{\text{b}}$ Institute of Mathematics and Interdisciplinary Sciences, Xidian University, Xi'an 710126, China.\\
	}
\begin{document}
	\date{}
\maketitle

\begingroup
\renewcommand\thefootnote{}% 去掉编号格式
\footnotetext{E-mail addresses: matlkh@nus.edu.sg (K. Leung), zhant220@163.com (T. Zhang).}
\addtocounter{footnote}{-1}% 不影响后续脚注编号
\endgroup

\begin{abstract}

A set $M$ of nonzero integers is said to split a finite abelian group $G$ if there exists a subset $S\subseteq G$ such that $M\cdot S = G\setminus\{0\}$. Such a splitting is called purely singular if every prime divisor of $|G|$ divides some element of $M$. In 1995,
Woldar \cite{W1995} conjectured that the finite abelian groups admitting a purely singular splitting by the set $\{1,2,\dots,k\}$ are precisely the cyclic groups of orders $1$, $k+1$, and $2k+1$. In this paper, we prove this conjecture.
	
	\medskip
	
	\noindent {{\it Keywords\/}: Purely singular splitting, lattice tiling, semi-cross.}
	
	\smallskip
	
	\noindent {{\it AMS subject classifications\/}: 52C22, 05A18, 11H71, 20K01.}
\end{abstract}

\section{Introduction}
Problems involving tilings of $\mathbb{R}^n$ by clusters of cubes have a long and rich history, dating back at least to the early work of Minkowski \cite{M1907}. In order to formalize this setting, let
\[Q=\{(x_1,\dots,x_n): 0\le x_i<1,\ x_i\in\mathbb{R}\}\]
denote the unit cube, which we regard as being placed at the origin.

Given a vector $e\in\mathbb{R}^n$, a unit cube placed at $e$ is defined as the translate
\[e+Q=\{e+x: x\in Q\}.\]
Building on this notion, a cluster of cubes is defined as a finite (or, more generally, discrete) union of pairwise disjoint translates of $Q$, namely
\[\mathcal{C}=\mathcal{B}+Q=\{e+Q: e\in\mathcal{E}\},\]
where $\mathcal{B}\subset\mathbb{R}^n$.

With this terminology in place, we can now describe packings and tilings. A set of pairwise disjoint translates of $\mathcal{C}$ is called a packing of $\mathbb{R}^n$ by $\mathcal{C}$. If, in addition, these translates cover the entire space $\mathbb{R}^n$, then the packing is said to be a tiling.

Furthermore, when the set of translation vectors forms an additive subgroup of $\mathbb{Z}^n$, the tiling is called a lattice tiling. This algebraic structure allows for a useful discrete reformulation: if $\mathcal{B}\subset\mathbb{Z}^n$, then a tiling of $\mathbb{R}^n$ by $\mathcal{C}$ is equivalent to a tiling of $\mathbb{Z}^n$ by $\mathcal{B}$.

Motivated by applications to error-correcting codes for flash memories, Wei and Schwartz \cite{WS22} introduced the notion of a limited-magnitude error ball. For integers $n\ge t\ge1$ and $k_{+}\ge k_{-}\ge0$, the $(n,t,k_{+},k_{-})$-error ball is defined as
\begin{align*}
\mathcal{B}(n,t,k_{+},k_{-})
:=\{{\bf a}=(a_1,a_2,\dots,a_n)\in\mathbb{Z}^n:\ a_i\in[-k_{-},k_{+}],\ \mathrm{wt}({\bf a})\le t\},
\end{align*}
where $\mathrm{wt}({\bf a})$ denotes the Hamming weight of ${\bf a}$.

As a first observation, when $n=t$, the set $\mathcal{B}(n,t,k_{+},k_{-})$ coincides with the cube $[-k_{-},k_{+}]^n$. Therefore, in what follows, we restrict our attention to the nontrivial case $1\le t\le n-1$.

The study of tilings (and packings) of $\mathbb{Z}^n$ by $\mathcal{B}(n,1,k_{+},k_{-})$ has attracted considerable attention in recent years, both for its intrinsic combinatorial interest and for its applications to flash memory. In particular, extensive research has been devoted to the cases of the cross $\mathcal{B}(n,1,k,k)$ and the semi-cross $\mathcal{B}(n,1,k,0)$ (see, for example, \cite{GS81,HS1974,HS1984,HS1986,KBE11,KLNY11,KLY12,S67,S1984,SS1994,S86,S87,T98,W1995}).

Subsequently, Schwartz \cite{S12} generalized these shapes to the quasi-cross $\mathcal{B}(n,1,k_{+},k_{-})$, which has since been the subject of extensive investigation (see \cite{S14,XL20,XL21,YKB13,YZZG20,ZG16,ZG18,ZZG17} and the references therein).

In contrast, for the case $t\ge2$, only a limited number of results are known (see \cite{BE13,GWX25,KLNY11,LTWZ25,S90,ZLG2023} and the references therein). This highlights a significant gap in our understanding of higher-weight error balls.

It is also worth noting that the cross $\mathcal{B}(n,1,1,1)$ coincides with the Lee sphere of radius $1$. More generally, the existence of tilings of $\mathbb{Z}^n$ by Lee spheres is a long-standing open problem, known as the Golomb–Welch conjecture \cite{GW1970}. Except for the cases of radius $1$ or dimension $2$, it is widely conjectured that such tilings do not exist. We refer the reader to \cite{E2011,H2009,H2009-2,HA2012,LZ2020,P1975,ZG2017,ZG2024,ZZ2019}, as well as the  works \cite{E22,HK2018}, for surveys on the current status of this conjecture.

In this paper, we focus on lattice tilings of $\mathbb{Z}^n$ by semi-cross $\mathcal{B}(n,1,k,0)$. It is well known that this geometric problem can be reduced to an algebraic one, namely the problem of splitting finite abelian groups.

To this end, let $G$ be a finite abelian group, written additively. For $s\in G$ and a nonnegative integer $m\in\mathbb{Z}$, we denote by $ms$ the sum $s+\cdots+s$, where $s$ appears $m$ times.

\begin{definition}
    A \emph{splitting} of $G$ consists of a pair of sets $M\subset\mathbb{Z}\setminus\{0\}$, called the \emph{multiplier set}, and $S=\{s_1,s_2,\dots,s_n\}\subseteq G$, called the \emph{splitter set}, such that every nonzero element of $G$ admits a unique representation of the form $ms$, where $m\in M$ and $s\in S$.
\end{definition}

Let $k$ be a positive integer, and define
\[S(k):=\{1,2,\dots,k\}.\]
The connection between tilings and group splittings is given by the following classical result.

\begin{theorem}\cite{HS1986,SS1994}
There exists a lattice tiling of $\mathbb{Z}^n$ by the semi-cross $\mathcal{B}(n,1,k,0)$ if and only if there exists a finite abelian group $G$ of order $nk+1$ such that $S(k)$ splits $G$.
\end{theorem}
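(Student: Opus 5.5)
The plan is to pass between lattices and quotient groups. A lattice $\Lambda\subseteq\mathbb{Z}^n$ of finite index gives the finite abelian group $G=\mathbb{Z}^n/\Lambda$, and the standard basis vectors $e_1,\dots,e_n$ map to elements $s_1,\dots,s_n$ of $G$. The semi-cross is
\[\mathcal{B}(n,1,k,0)=\{0\}\cup\{me_i: 1\le m\le k,\ 1\le i\le n\},\]
so it has exactly $nk+1$ points. The central claim to verify is the following: $\Lambda+\mathcal{B}(n,1,k,0)$ is a tiling if and only if the canonical map $\phi:\mathbb{Z}^n\to G$ restricts to a bijection from $\mathcal{B}(n,1,k,0)$ onto $G$. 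Granting this, a bijection means $0$ goes to $0$ and the elements $ms_i$ ($1\le m\le k$, $1\le i\le n$) are distinct, nonzero and exhaust $G\setminus\{0\}$. That is exactly the statement that $S(k)$ splits $G$ with splitter set $S=\{s_1,\dots,s_n\}$, and in that case $|G|=nk+1$.

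For the forward direction, suppose $\Lambda+\mathcal{B}$ is a lattice tiling, writing $\mathcal{B}=\mathcal{B}(n,1,k,0)$. Every $x\in\mathbb{Z}^n$ is uniquely $\lambda+b$ with $\lambda\in\Lambda$ and $b\in\mathcal{B}$. Hence each coset of $\Lambda$ meets $\mathcal{B}$ in exactly one point, so $\phi|_{\mathcal{B}}$ is bijective. In particular $\Lambda$ has finite index $|\mathcal{B}|=nk+1$, and $G$ is a finite abelian group of that order. The splitting then follows from the central claim.

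For the converse, let $G$ have order $nk+1$ and let $S=\{s_1,\dots,s_n\}$ be a splitter set for $S(k)$. The splitting definition says every nonzero element is uniquely $ms$ with $m\in S(k)$, $s\in S$. Counting gives exactly $nk$ distinct products $ms_i$, all nonzero, so the $s_i$ are distinct and nonzero. Define the homomorphism $\phi:\mathbb{Z}^n\to G$ by $e_i\mapsto s_i$; it is well defined because $\mathbb{Z}^n$ is free. Since $\{ms_i\}=G\setminus\{0\}$, the map is surjective, and $\Lambda=\ker\phi$ is a sublattice of index $nk+1$. By construction $\phi$ maps $\mathcal{B}$ bijectively onto $G$. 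Therefore each $x\in\mathbb{Z}^n$ has a unique $b\in\mathcal{B}$ with $\phi(b)=\phi(x)$, i.e. $x\in b+\Lambda$ uniquely. So $\Lambda+\mathcal{B}$ tiles $\mathbb{Z}^n$, and by the discrete reformulation in the introduction it gives a lattice tiling of $\mathbb{R}^n$ by the cluster $\mathcal{B}+Q$.

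The only delicate point is bookkeeping rather than mathematics. One must ensure that uniqueness of representation in the splitting forces the $s_i$ to be distinct and nonzero, so that the order $|G|=nk+1$ matches the count of points in the semi-cross. One must also justify the identification of tilings of $\mathbb{R}^n$ by $\mathcal{B}+Q$ with tilings of $\mathbb{Z}^n$ by $\mathcal{B}$, which the paper takes as given.
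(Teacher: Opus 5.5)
Your argument is correct. The paper gives no proof of its own here; it only cites \cite{HS1986,SS1994}, and what you wrote is the standard argument behind that citation: a sublattice $\Lambda$ tiles $\mathbb{Z}^n$ with $\mathcal{B}(n,1,k,0)$ exactly when $\mathcal{B}(n,1,k,0)$ is a complete system of coset representatives of $\Lambda$, and setting $s_i=e_i+\Lambda$ in $G=\mathbb{Z}^n/\Lambda$ turns this into $S(k)\cdot S=G\setminus\{0\}$ with all $nk$ products distinct. For the bookkeeping you flag, read the splitting definition as in the abstract ($M\cdot S=G\setminus\{0\}$ with distinct products), which gives $|S|=n$ and $0\notin S$ immediately; under the looser wording of the paper's Definition, $0$ is the only element that could sneak into $S$, and you would simply discard it.
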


Moreover, the above problem can be further reduced to the cyclic case.

\begin{theorem}\cite{H1983,SS1994}
If $S(k)$ splits a finite abelian group $G$, then it also splits the cyclic group $\mathbb{Z}_{|G|}$.
\end{theorem}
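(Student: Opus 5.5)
The approach is to reduce the splitting condition to data attached to cyclic subgroups, and then to rebuild that data inside $\mathbb{Z}_{N}$, where $N=|G|$. Write $S(k)\cdot S=G\setminus\{0\}$. For $s\in S$ and $1\le j\le k$, the element $js$ generates a cyclic subgroup of $\langle s\rangle$ of order $\mathrm{ord}(s)/\gcd(j,\mathrm{ord}(s))$. So the splitting condition decomposes over cyclic subgroups: for every cyclic subgroup $C\le G$ of order $d>1$, the generators of $C$ must be partitioned by the elements $js$ with $\langle js\rangle=C$.

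Identify the generators of a cyclic group of order $d$ with $(\mathbb{Z}/d\mathbb{Z})^{*}$. Each pair $(s,j)$ with $\langle js\rangle=C$ then contributes one unit class. This unit class depends only on the order $d'=\mathrm{ord}(s)$, on $j$, and on the position of $s$ among the generators of $\langle s\rangle$. The first step is to encode this precisely as a combinatorial \emph{local condition} at each order $d\mid N$, stated purely in terms of unit groups $(\mathbb{Z}/d'\mathbb{Z})^{*}$ and the reduction maps $(\mathbb{Z}/d'\mathbb{Z})^{*}\to(\mathbb{Z}/d\mathbb{Z})^{*}$. It should no longer refer to $G$ itself.

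The second step is the transfer. I would choose a chain of cyclic subgroups of $G$ along which the covering data can be read off consistently, for instance inside one cyclic subgroup $\langle g\rangle$ of maximal order, and then handle the remaining orders prime by prime. Concretely, I would first reduce to $p$-groups: both $G$ and $\mathbb{Z}_N$ decompose as products of Sylow subgroups, and the unit-group bookkeeping factors by the Chinese remainder theorem. Within the $p$-part, I would show by induction on the exponent that a family of local solutions for $G_p$ yields one for $\mathbb{Z}_{p^{a}}$, where $p^{a}=|G_p|$. The tool here is the observation that the number of cyclic subgroups of order $p^{e}$ is a multiple, with a uniform pattern, of the corresponding count in the cyclic group. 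This would let me extract a sub-solution of the local equations.

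The main obstacle is exactly this extraction. The multipliers $j\le k$ may share factors with $N$, so an element of order $d'$ can cover generators of a subgroup of strictly smaller order, and the local conditions at different orders are coupled. I would first try to handle this by working top-down from the largest order and fixing, at each step, the splitter elements whose order equals the current $d$. Then I would show that the residual covering requirement for smaller orders has the same shape in $G$ as in the cyclic group. If this coupling cannot be controlled, I would fall back on the lattice formulation of the first theorem above, and try to modify the tiling lattice to one with cyclic quotient while preserving the tiling property.
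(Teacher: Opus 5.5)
The paper gives no proof of this statement; it only quotes it from Hickerson and Stein--Szab\'o. So your proposal has to stand on its own, and it has a genuine gap. The transfer step, which is the whole content of the theorem, is left open, and the route you sketch for it would fail.

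First, the reduction to $p$-groups is invalid. The same multiplier $j$ acts on all Sylow components at once. The Chinese remainder theorem factors the unit groups, but the set of $j\le k$ with prescribed valuations at several primes is not a product of per-prime sets. Hence a splitting of $G$ need not induce one of $G_p$. For example, $S(5)$ splits $\mathbb{Z}_6$ with $S=\{1\}$, yet it splits neither $\mathbb{Z}_2$ nor $\mathbb{Z}_3$, since $2s=0$ in $\mathbb{Z}_2$ and $3s=0$ in $\mathbb{Z}_3$ while $2,3\in S(5)$.

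Second, the extraction idea runs in the wrong direction. If $G_p$ is noncyclic of order $p^a$, then $\mathbb{Z}_{p^a}$ contains elements whose order exceeds the exponent of $G_p$. No sub-solution of the local equations of $G$ can supply data at those orders. Yet in $\mathbb{Z}_{p^a}$ with $p\le k$, the splitters are forced into exactly those top orders (for odd $p$ this is Lemma~\ref{lemma-1}, since $p\in S(k)$). You flag the coupling at primes $p\le k$ as the main obstacle but give no mechanism for it, and the lattice fallback is not an argument.

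The missing idea is that at primes $p\le k$ there is nothing to transfer. Since $js\notin pG$ if and only if $p\nmid j$ and $s\notin pG$, counting gives
\[
|G|\bigl(1-p^{-r}\bigr)=|G\setminus pG|=\Bigl(k-\Bigl\lfloor \frac{k}{p}\Bigr\rfloor\Bigr)|S\setminus pG|\le \Bigl(k-\Bigl\lfloor \frac{k}{p}\Bigr\rfloor\Bigr)\frac{|G|-1}{k},
\]
where $p^r=|G[p]|$. This forces $p^r<k/\lfloor k/p\rfloor<2p$, so $r=1$: every Sylow subgroup at a prime $\le k$ is already cyclic.

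Now write $G=G_1\oplus G_2$, with $G_1$ the part at primes $>k$ and $G_2$ (cyclic) the part at primes $\le k$. Every $j\le k$ acts as an automorphism of $G_1$. So the splitting breaks into layers $\mathrm{gen}(C)\times G_2$, one for each cyclic subgroup $C\le G_1$, where $\mathrm{gen}(C)$ is the set of generators of $C$. Each layer condition depends only on $d=|C|$. A solution at order $d$ pulls back along the surjection $(\mathbb{Z}/d'\mathbb{Z})^{*}\to(\mathbb{Z}/d\mathbb{Z})^{*}$ to a solution at any $d'$ with $d\mid d'\mid |G_1|$; invertibility of $j$ modulo $d'$ gives both existence and uniqueness.

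Every prime divisor of $|G_1|$ occurs as an element order in $G$. So this produces all layers of $\mathbb{Z}_{|G_1|}\oplus G_2\cong\mathbb{Z}_{|G|}$, with the layer $C=0$ copied verbatim. Your local-condition bookkeeping is the right framework at the primes $>k$, but the orders absent from $G$ are obtained by lifting, not by extraction.
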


Consequently, the existence of lattice tilings by semi-crosses is closely tied to the structure of splittings of cyclic groups. In particular, the following necessary condition is known.

\begin{theorem}\cite[Chapter 4, Theorem 3]{SS1994}\label{thm2}
Let $n\ge3$ and $k$ be integers. If $S(k)$ splits a finite abelian group $G$ of order $nk+1$, then $k\le n-2$.
\end{theorem}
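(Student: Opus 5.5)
The plan is to argue by contradiction, reducing first to the cyclic case and then comparing the "long" arithmetic progressions $\{s,2s,\dots,ks\}$ against the small size of the group. By the theorem of \cite{H1983,SS1994} quoted above, we may assume $G=\mathbb{Z}_m$ with $m=nk+1$. The splitting partitions $\mathbb{Z}_m\setminus\{0\}$ into the $n$ blocks
\[A_s=\{s,2s,\dots,ks\},\qquad s\in S,\]
each of size exactly $k$. Assume, for contradiction, that $k\ge n-1$, so that $m\le k^2+k+1$. The dichotomy to exploit is that each block is long (length $k\approx\sqrt m$), while there are only $n\le k+1$ blocks.

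First we record elementary consequences of uniqueness of representation. No $s\in S$ has order at most $k$, since otherwise $\mathrm{ord}(s)\cdot s=0$ would lie in $A_s$. More generally, $is\ne js$ for $1\le i<j\le k$ forces $\mathrm{ord}(s)>k-1$. Next, the element $1$ lies in a unique block, say $1=a s_1$. Then $\gcd(a,m)=1$ and $s_1=a^{-1}$. The idea is to follow the small integers $1,2,\dots,k$ and the elements $-1,-2,\dots,-k$ through the blocks: $j=b_j s_{i(j)}$ for each $j\le k$. Two integers $j,j'$ in the same block $A_s$ satisfy $b_{j'}j\equiv b_jj'\pmod m$ with $|b_{j'}j-b_jj'|<k^2<m$, so $b_{j'}j=b_jj'$ as integers. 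Hence within a block the small integers appear as a scaled copy of an integer progression. Doing the same for $-1,\dots,-k$ and for $\pm(k+1),\dots$ lets us describe each block meeting $[-k,k]$ quite explicitly: it is essentially $\{c,2c,\dots\}$ with $c$ a small integer, or $c/d$ with $d$ small.

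The counting step is then as follows. The blocks meeting $[1,k]\cup[-k,-1]$ have this rigid "integer-like" shape. Since there are at most $n\le k+1$ blocks and $2k$ such elements to cover, some block must contain many elements of $[-k,k]$. The rigid shape then forces such a block (for example the block of $s=1$, if $1\in S$, which is $[1,k]$ itself) to interact badly with $-1,\dots,-k$. For instance, the element $k+1$ must be covered as $b s$ with $s$ forced to be one of a few rational-type values $c/d$, and then $bs$ collides with an already covered small element. I would try to turn this into an explicit contradiction by treating separately the cases $1\in S$ and $1\notin S$, i.e.\ $a=1$ versus $a\ge2$. In the second case I would use that the block $A_{a^{-1}}$ contains $1,a^{-1},\dots$ and then work with $-a^{-1}$.

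The main obstacle I expect is the case $k=n-1$ exactly, where $m=k^2+k+1$ and the inequality $|b_{j'}j-b_jj'|<m$ has almost no slack. There the counting is tight, and one probably needs an additional invariant to finish. My first attempt would be the sum of all nonzero elements,
\[\sum_{s\in S}\frac{k(k+1)}{2}\,s\equiv\frac{m(m-1)}{2}\pmod m,\]
or a parity/orbit argument under multiplication by $2$ as in the small case $m=7$, to rule out the last configurations.
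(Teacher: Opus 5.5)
The paper does not prove this statement; it is quoted from \cite{SS1994}. So your argument has to stand on its own, and as written it has a genuine gap at its central step. Your rigidity claim is that $j=b_js$ and $j'=b_{j'}s$ with $j,j'\in[1,k]$ force $b_{j'}j=b_jj'$ in $\mathbb{Z}$. It rests on $|b_{j'}j-b_jj'|<k^2<m$. But the contradiction hypothesis $k\ge n-1$ gives only the \emph{upper} bound $m\le k^2+k+1$, whereas the needed lower bound $m=nk+1>k^2$ holds exactly when $n\ge k$. For $3\le n\le k-1$ (e.g.\ $n=3$, $m=3k+1$, $k$ large) the congruence $b_{j'}j\equiv b_jj'\pmod m$ does not lift. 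There the blocks have length $k\approx m/n\gg\sqrt m$, not $\approx\sqrt m$ as your heuristic assumes. So the ``integer-like shape'' of the blocks meeting $[-k,k]$ is unavailable in the generic case, and the range $n\le k-1$ is not treated at all.

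Even for $n\in\{k,k+1\}$, where the lifting is valid, no contradiction is derived:
\begin{itemize}
\item Pigeonhole over $n$ blocks gives only a block containing two elements of $[-k,k]\setminus\{0\}$, not ``many''.
\item The assertion that $k+1=bs$ with $s$ of the form $c/d$ must collide with an already covered small element is never argued.
\item The tight case $k=n-1$ is explicitly left open. The proposed invariant is a single congruence on $\sum_{s\in S}s$, which cannot by itself exclude configurations.
\end{itemize}
Your case split $1\in S$ versus $1\notin S$ is also unnecessary: $1=as_1$ with $a$ a unit, so $aS$ is a splitter containing $1$.

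A tool that works uniformly for all $n\le k+1$ is Dirichlet's pigeonhole on the $k+1$ distinct points $0,s,\dots,ks$. Since $m<(k+1)^2$, two of them are at circular distance at most $\lfloor m/(k+1)\rfloor\le k$, so some $js$ with $1\le j\le k$ lies in $[-k,k]\setminus\{0\}$. With $1\in S$, every other block must therefore meet $[-k,-1]$. If also $-1\in S$, any third block (one exists since $n\ge 3$) is immediately impossible. The asymmetric case, where $-1=at$ with $a\ge 2$, still requires a real argument, and that is exactly what your proposal is missing.
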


To further investigate splittings of finite abelian groups, we recall the following definition from \cite{SS1994}.

\begin{definition}
Let $G$ be a finite abelian group, and let $M$ and $S$ be the multiplier and splitter sets forming a splitting of $G$. The splitting is called \emph{nonsingular} if $\gcd(|G|,m)=1$ for every $m\in M$. Otherwise, it is called \emph{singular}. Moreover, if for every prime divisor $p$ of $|G|$ there exists some $m\in M$ such that $p\mid m$, then the splitting is called \emph{purely singular}.
\end{definition}

This terminology provides a useful framework for the study of splittings, since the classification problem for splittings of finite abelian groups can be reduced to the classification of nonsingular and purely singular splittings \cite{SS1994}. Although a number of results are known for nonsingular splittings, the purely singular case remains much less understood. One result established by Woldar \cite{W1995} is the following.

\begin{theorem}\label{lem-1}\cite{W1995}
If $S(k)$ splits a finite abelian group $G$ purely singularly, then either $\gcd(|G|,6)=1$ or $G$ is one of $\mathbb{Z}_1$, $\mathbb{Z}_{k+1}$, and $\mathbb{Z}_{2k+1}$.
\end{theorem}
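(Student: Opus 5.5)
The plan is to argue by contradiction with a counting argument on the filtration $G\supseteq pG\supseteq p^2G\supseteq\cdots$ for $p\in\{2,3\}$. Assume $S(k)$ splits $G$ purely singularly, with $|G|=nk+1$ and $S=\{s_1,\dots,s_n\}$, and that $p\mid |G|$ for some $p\in\{2,3\}$. The target is to show $n\le 2$ and $G$ cyclic. This suffices: $n=0$ gives $\mathbb{Z}_1$, $n=1$ gives $G=\{0,s,2s,\dots,ks\}$, which is cyclic of order $k+1$, and $n=2$ gives order $2k+1$, where cyclicity is checked directly from the two orbits. Note also that pure singularity forces every prime divisor of $|G|$ to be at most $k$.

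\textbf{Step 1 (layer counting).} For an element $x=ms$ with $s\in p^iG\setminus p^{i+1}G$, we have $x\in p^{i}G\setminus p^{i+1}G$ exactly when $p\nmid m$; otherwise $x$ lies in a deeper layer. Let $t_i$ be the number of splitters in $p^iG\setminus p^{i+1}G$, and let $c_p(k)=k-\lfloor k/p\rfloor$ be the number of $m\in S(k)$ coprime to $p$. The top layer can only be hit by $ms$ with $p\nmid m$ and $s$ in the top layer, so
\[
|G|-|pG| = t_0\,c_p(k).
\]
For deeper layers, the count picks up contributions from $ms$ with $s$ in a higher layer and $p^j\,\|\,m$. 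This gives a triangular system
\[
|p^iG|-|p^{i+1}G| = \sum_{j\le i} t_{i-j}\,\#\{m\le k:\ p^{j}\,\|\,m\},
\]
where a factor $p^j$ in $m$ pushes $s$ down exactly $j$ layers when $G$ is a $p$-group. For a general $G$ I would first reduce to the $p$-part by working in $G/G_{p'}$. The point to verify here is that projection preserves the layer structure, even though it may destroy uniqueness of representations.

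\textbf{Step 2 (exploiting small $p$).} For $p=2$, $c_2(k)=\lceil k/2\rceil$ and the top layer has size $|G|(1-2^{-r})$ with $r=\operatorname{rank}(G/2G)$. Since $|G|=nk+1$ is even here and $k$ is odd, the identities become very rigid. Comparing the top-layer equation with the total count $\sum_i t_i = n$ yields $t_0\lceil k/2\rceil\ge |G|/2=(nk+1)/2$, which forces $t_0$ close to $n$. The next-layer equation then bounds the remaining splitters, because all elements $ms$ with $m$ even and $s$ in the top layer must fit into $2G\setminus\{0\}$, a set of size $|G|/2^r-1$. Pitting $t_0\lfloor k/2\rfloor\le |G|/2^r-1$ against the lower bound on $t_0$ should give $r=1$ (so the $2$-part is cyclic) and $n\le 2$. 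For $p=3$ the same scheme applies with $c_3(k)=k-\lfloor k/3\rfloor\approx 2k/3$ and the top layer of size at least $2|G|/3$. Theorem~\ref{thm2} ($k\le n-2$ when $n\ge3$) supplies the extra inequality needed to rule out $n\ge3$.

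\textbf{Main obstacle.} I expect the main difficulty in the $n\ge 3$ regime to be the deeper-layer equations. When $G$ is not a $p$-group, the projection to the $p$-part loses injectivity, so the layer equations become inequalities. The floors $\lfloor k/p^j\rfloor$ then make the resulting bounds differ from $n$ by only $O(1)$. My first attempt would use only the top two layers, together with the exact identity $\sum_i t_i=n$ and $k\le n-2$. If that leaves finitely many residual cases (e.g.\ small $k$ relative to $p$, such as $k=2,3$), I would dispatch them by direct inspection of orders of splitters. For example, an element of order $p$ must equal $ms$ with $\operatorname{ord}(s)=p\cdot\gcd(\operatorname{ord}(s),m)$, and uniqueness limits how many such representations can coexist.
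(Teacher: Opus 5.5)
The paper does not prove this statement; it quotes it from Woldar. So your proposal has to stand on its own, and its central step, Step 2, does not work.

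\textbf{Why the counting cannot reach $n\le 2$.} You may reduce to $\mathbb{Z}_{|G|}$, since pure singularity depends only on $|G|$ and $k$, and $n\le 2$ forces cyclicity anyway. In a cyclic group the single-prime layer equations are exact, and summing them gives only $k\sum_i t_i=|G|-1$. So $\sum_i t_i=n$ carries no extra information. The only possible obstructions are integrality and nonnegativity of the triangular solution, and for $p=2$ these are weak:
\begin{itemize}
\item With cyclic $2$-part, the top layer gives exactly $t_0=|G|/(k+1)$.
\item Your second inequality $t_0\lfloor k/2\rfloor\le |G|/2-1$ then becomes $(nk+1)(k-1)\le (nk-1)(k+1)$, which is equivalent to $n\ge 1$. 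It is vacuous.
\end{itemize}

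\textbf{A concrete case your toolkit cannot exclude.} Take $k=9$, $n=381$, $|G|=3430=2\cdot 5\cdot 7^3$.
\begin{itemize}
\item Every prime divisor of $|G|$ is at most $9$, and $3\nmid |G|$, so only $p=2$ is available to you.
\item Since $2\,\|\,|G|$, there are only two $2$-adic layers. They give $t_0=1715/5=343$ and $t_1=(1714-4\cdot 343)/9=38$. These are nonnegative integers summing to $n$.
\item Also $k\le n-2$, so Theorem~\ref{thm2} is satisfied; it bounds $k$ above by $n$, which is the opposite of what you need.
\end{itemize}
The theorem says no such splitting exists, yet neither your two-layer comparison nor $\sum t_i=n$ nor Theorem~\ref{thm2} detects this. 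Nor is it a finite residue to be handled by inspection. With $k=9$ and $|G|=10\cdot 7^{3j}$ you get infinitely many parameter sets for which the $2$-adic system is consistent. In this example the obstruction sits at the prime $7$: its top layer would require $8t=2940$. So the $2$- and $3$-adic filtrations are not where the contradiction lives.

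\textbf{The obstacle you anticipated is not the real one.} After reducing to $\mathbb{Z}_{|G|}$, the layer of $ms$ is determined by the $p$-part of its order, so no projection is needed and no equality degrades to an inequality. Conversely, for non-cyclic $p$-parts your rule that $p^j\,\|\,m$ moves $s$ down exactly $j$ layers is false. For example, $(0,1)\in\mathbb{Z}_{p^2}\times\mathbb{Z}_p$ lies in the top layer, but $p(0,1)=0$.

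\textbf{What is missing.} You need a use of uniqueness of representations that goes beyond cardinalities. The paper's proof of Theorem~\ref{mainthm} supplies one. Write $k-\lfloor k/p\rfloor=p^\beta d m'$. The sets $TW_i$ built from the unit splitters are pairwise disjoint, because $p^\beta m't_1(x_1+s_i)=p^\beta m't_2(x_2+s_j)$ collapses to $p^\beta m't_1s_i=p^\beta m't_2s_j$, two representations with multipliers in $S(k)$. Comparing $\bigl|\bigcup_i TW_i\bigr|$ with $|\mathbb{Z}_{|G|}^*|$ then forces $k-\lfloor k/p\rfloor$ to absorb the whole prime-to-$p$ part of $|G|$. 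That is the kind of strong arithmetic constraint that actually bounds $n$, and your plan needs an ingredient of this sort to go through.
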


He further proposed the following conjecture, which, according to Woldar, was verified by Hickerson for all $k\le 3000$ \cite{W1995}.

\begin{conjecture}\cite{W1995}\label{conj1}
If $S(k)$ splits a finite abelian group $G$ purely singularly, then $G$ is one of $\mathbb{Z}_1$, $\mathbb{Z}_{k+1}$, or $\mathbb{Z}_{2k+1}$.
\end{conjecture}

The main goal of this paper is to prove Conjecture~\ref{conj1}. In view of Theorem~\ref{thm2}, it is sufficient to prove the following theorem.

\begin{theorem}\label{mainthm}
Let $n$ and $k$ be integers. If $S(k)$ splits $\mathbb{Z}_{nk+1}$ purely singularly, then $k\ge n$.
\end{theorem}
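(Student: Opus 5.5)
Suppose $S(k)$ splits $G=\mathbb{Z}_N$, $N=nk+1$, purely singularly, with splitter set $S=\{s_1,\dots,s_n\}$, and assume for contradiction that $n>k$. By Theorem~\ref{lem-1} we may also assume $\gcd(N,6)=1$, since otherwise $G$ is $\mathbb{Z}_1,\mathbb{Z}_{k+1}$ or $\mathbb{Z}_{2k+1}$, all of which have $n\le 2\le k$ except in trivial small cases that can be checked directly. Every prime divisor $p$ of $N$ then satisfies $5\le p\le k$, because pure singularity forces $p$ to divide some $m\in\{1,\dots,k\}$.

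The plan is to count elements of each subgroup. Let $p$ be a prime dividing $N$ and let $H_p=(N/p)\mathbb{Z}_N$ be the subgroup of order $p$. An element $ms$ lies in $H_p$ exactly when $s$ has order dividing $pm'$, where $m'=\gcd(m,\cdot)$. The natural step is to partition $S$ by the order of $s$. Consider first elements $s$ of order exactly $p$: the products $ms$ with $p\nmid m$ give $k-\lfloor k/p\rfloor$ distinct nonzero elements of $H_p$. In addition, any $s$ of order $pd$ with $d\mid m$ contributes the elements $ms$ with $d\mid m$ and $p\nmid m$. Hence
\[
p-1=\sum_{s\in S}\#\{m\le k:\ \mathrm{ord}(s)\mid pm,\ p\nmid m\}.
\]
Doing the same for every divisor $d$ of $N$ yields a system of equations of the form $|\{x:\mathrm{ord}(x)=d\}|=\varphi(d)=\sum_s \#\{m\le k:\ \mathrm{ord}(ms)=d\}$. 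The key point is that an element $s$ of order $N$ covers only about $k\varphi(N)/N$ elements of order $N$. Counting the elements of order exactly $N$ gives $\varphi(N)=t\cdot\#\{m\le k:\gcd(m,N)=1\}$, where $t$ is the number of generators in $S$. Since some prime $p\le k$ divides $N$, this count is at most $k-\lfloor k/p\rfloor$, and the same applies to the other primes dividing $N$.

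The main line is to compare the totals. There are $n$ splitter elements and each yields $k$ images, so $nk=N-1$. I would then weight each element $s$ of order $d$ by $k/d$ and show, via the $d$-level counts and a Möbius/inclusion–exclusion argument, that
\[
\sum_{s}\frac{\#\{m\le k\}}{\mathrm{ord}(s)}\ \text{ is controlled by }\ \sum_{d\mid N}\frac{\varphi(d)}{d}\cdot(\text{density}).
\]
The aim is a contradiction of the form $N\le$ (something like) $k^2+k$, that is $n\le k$. Heuristically, elements of small order $d$ are scarce: the only images landing in $H_d$ come from $s$ of order $d\cdot(\text{divisor of } m)\le dk$. So the small subgroups $H_p$ of size $p\le k$ must be covered by very few splitter elements. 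Meanwhile, pure singularity means that every $s$ of large order has images $ms$ of smaller order whenever $\gcd(m,N)>1$, and these images overfill the small subgroups unless $N$ is small relative to $k$. Concretely, taking $p$ the largest prime divisor of $N$, each generator $s$ sends $m=p,2p,\dots$ into order $N/p$ elements. Counting how many generators there are, $t\approx \varphi(N)/(k\prod(1-1/p))\approx N/k$, the number of elements of order $N/p$ they produce is roughly $(N/k)\lfloor k/p\rfloor$. This must not exceed $\varphi(N/p)\approx N/p\cdot\prod(1-1/q)$, which gives an inequality forcing $N\lesssim k^2$.

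The main obstacle is making this estimate exact rather than heuristic. Floor functions and the product $\prod(1-1/q)$ appear on both sides, and they must be matched carefully, probably by comparing, for a fixed prime $p$, the coset structure of $\{1,\dots,k\}$ modulo $p$ and using that $\#\{m\le k:\gcd(m,N)=1\}$ must divide $\varphi(N)$ exactly. I would first try to handle the case where $N$ is a prime power $p^a$, where the counts are clean: $\varphi(p^a)=t(k-\lfloor k/p\rfloor)$, with similar equalities at each level, yielding $p^{a-1}\mid$ the relevant quantities and $p^a\le pk\cdot(\cdots)$. I would then extend to general $N$ by induction on the number of prime factors, passing to the quotient by $H_p$.
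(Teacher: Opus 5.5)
There is a genuine gap. Your proposal never produces an inequality that bounds $N$ (equivalently $n$) in terms of $k$.

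The one exact relation you derive, $\varphi(N)=t\,|E|$ with $E=\{m\le k:\ \gcd(m,N)=1\}$, is correct. Everything after it is heuristic, and the concrete mechanism you propose cannot work. Count, without approximations, the elements of order $N/q$ produced by the $t$ generators in $S$:
\[
\frac{\varphi(N)}{|E|}\,\#\{m\le k:\ \gcd(m,N)=q\}\ \le\ \varphi(N/q).
\]
For $q^2\mid N$ this reduces to $q\cdot\#\{j\le\lfloor k/q\rfloor:\ \gcd(j,N)=1\}\le |E|$. For $q\parallel N$ it is the same with $q-1$ in place of $q$ and coprimality taken to $N/q$. Both sides depend only on $k$ and the set of primes dividing $N$, so the size of $N$ cancels. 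Your estimate $(N/k)\lfloor k/p\rfloor$ dropped the coprimality factor that exactly matches the $\prod(1-1/q)$ on the right. Single-level counts of this kind therefore can never force $N\lesssim k^2$.

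Two further steps are not supported. The M\"obius-weighted comparison is never formulated. The induction by passing to $\mathbb{Z}_N/H_p$ is unjustified, since the images of a splitting in a quotient need not form a splitting. There is also an error in your level-$p$ equation: the condition $p\nmid m$ should not be there. An $s$ of order $p^2$ with $m=p$ yields an element of order $p$. The correct identity is $p-1=\sum_{s\in S}\#\{m\le k:\ \mathrm{ord}(s)\mid pm\}$.

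What is missing is a divisibility statement that turns exact counts into a lower bound on $k$, namely that $k-\lfloor k/p\rfloor=dN/p$. The paper obtains it as follows.

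\begin{enumerate}
\item Take $p$ to be the \emph{smallest} prime divisor and write $N=p^\alpha m$. Counting elements whose order is divisible by $p^\alpha$ gives $(k-\lfloor k/p\rfloor)|S_\alpha|=p^{\alpha-1}(p-1)m$. Hence $k-\lfloor k/p\rfloor=p^\beta d m'$ with $d\mid p-1$ and $m'\mid m$.
\item For each generator $s_i\in S$, take the units in the coset $s_i+M'$, where $M'$ is the subgroup of order $p^\beta m'$, and dilate them by $1,\dots,d$. The resulting sets $TW_i$ are pairwise disjoint by uniqueness of representation: multiply by $p^\beta m'$ and note $p^\beta m' t\le k$.
\item Comparing $r|TW_i|\le\varphi(N)=r|E|$ with $|TW_i|\ge|D|\ge|E|$ forces equality throughout, hence $m'=m$.
\item A base-$p$ digit analysis of the lower-level counts, plus Lemma~\ref{lemma-1} when $m=1$, forces $\beta=\alpha-1$.
\item Then $k\ge dN/p$, so $n<N/k\le p/d\le p\le k$.
\end{enumerate}

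Even your prime-power case needs Lemma~\ref{lemma-1}, because that lemma is what guarantees that $S$ consists of generators, i.e.\ $t=n$. Once you know this, $n(k-\lfloor k/p\rfloor)=p^{a-1}(p-1)$ together with $p\nmid n$ (from $nk=p^a-1$) gives $n\mid p-1$, so $n<p\le k$ at once. The level equations alone do not identify $t$ with $n$.
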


\section{Proof of Theorem~\ref{mainthm}}
In this section, we prove our main result, namely Theorem~\ref{mainthm}. We begin by recalling the following lemma, which will play an important role in the proof.

\begin{lemma}\cite{S87}\label{lemma-1}
If $p$ is an odd prime and $\mathbb{Z}_{p^t}\setminus\{0\}=M\cdot S$ is a splitting, then either $M$ or $S$ consists entirely of elements relatively prime to $p$.
\end{lemma}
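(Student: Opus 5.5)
The plan is to stratify $G=\mathbb{Z}_{p^t}$ by $p$-adic valuation and exploit the fact that, for $p$ odd, the unit group $U=\mathbb{Z}_{p^t}^{*}$ is cyclic. Write $L_j=p^jU$ for the set of elements of exact valuation $j$ ($0\le j\le t-1$), so $|L_j|=(p-1)p^{t-1-j}$. As a $U$-set, $L_j\cong U/K_j$ with $K_j=\{u\in U: u\equiv 1 \pmod{p^{t-j}}\}$. Let $a_i$ (resp.\ $b_j$) be the number of elements of $M$ (resp.\ $S$) of valuation $i$ (resp.\ $j$). In a splitting all products $ms$ are nonzero and distinct, so counting layer by layer gives the polynomial identity
\[
A(x)B(x)=(p-1)\sum_{j=0}^{t-1}p^{t-1-j}x^j, \qquad A(x)=\sum_i a_ix^i,\quad B(x)=\sum_j b_jx^j .
\]
We argue by contradiction, assuming $A$ and $B$ both have a nonzero term of positive degree.

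This count alone is not enough. For $t=4$ the right-hand side factors as $(p-1)(x+p)(x^2+p^2)$ with nonnegative coefficients, so the argument must use the group structure and not just sizes. The first structural step is the layer-$0$ equation. Writing $M_1=M\cap U$ and $S_1=S\cap U$, we get the factorization $U=M_1\cdot S_1$ of the cyclic group $U\cong \mathbb{Z}_{p-1}\times\mathbb{Z}_{p^{t-1}}$. Project this factorization to each quotient $U/K_j$. Layer $L_j$ is covered by the multisets $p^{i}M^{(i)}\cdot p^{j-i}S^{(j-i)}$, and in particular by $M_1\cdot S^{(j)}$ and $M^{(j)}\cdot S_1$. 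Here $M^{(i)},S^{(i)}$ denote the unit parts of the elements of valuation $i$, read in $U/K_{i}$.

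Now take $i_0\ge 1$ minimal with $a_{i_0}>0$ and $j_0\ge1$ minimal with $b_{j_0}>0$, and assume without loss of generality that $i_0\le j_0$. The second step is to compare layer $L_{i_0}$, and then layer $L_{i_0+j_0}$ (or the top layer $L_{t-1}$ if $i_0+j_0\ge t$, using that products are nonzero), with the image of the factorization $U=M_1S_1$ in $U/K_{i_0}$. The image of $M_1$ in $U/K_{i_0}$ already tiles against the image of $S_1$ with a fixed multiplicity $|U|/|U/K_{i_0}|=p^{i_0}$. The extra contributions $M^{(i_0)}S_1$ (and $M_1S^{(i_0)}$ when $i_0=j_0$) must fit exactly into the remaining part of $L_{i_0}$, and the mixed product $M^{(i_0)}S^{(j_0)}$ must land in a higher layer without overlapping. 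I would translate this into an identity in the group ring $\mathbb{Z}[U/K]$ for suitable $K$. I would then apply characters $\chi$ of $U$ that are trivial on $K$ but nontrivial on its order-$p$ part. These satisfy $\chi(M_1)\chi(S_1)=0$, and the aim is a relation forcing $\chi(M^{(i_0)})\chi(S_1)$ to be both zero and nonzero.

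The main obstacle is this character step. We must show that $p$ odd, through cyclicity of $U$ and the fact that $K_j$ is the unique subgroup of its order, rules out a layered factorization. The case $p=2$ shows oddness must enter, and there $U$ is not cyclic. I would first try to show that the vanishing of $\chi(M_1)\chi(S_1)$ for all characters of order $p^{r}$ pins the ``$p$-part'' of the tiling onto one factor, say $M_1$. This would give periodicity modulo $K_{i_0}$, as in a Rédei-type argument for cyclic $p$-groups. That periodicity would then make the extra elements $M^{(i_0)}S_1$ double-cover $L_{i_0}$, contradicting the count $|L_{i_0}|=(p-1)p^{t-1-i_0}$. If this does not close directly, the fallback is induction on $t$. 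Pass to $\mathbb{Z}_{p^{t-1}}$ via $x\mapsto px$, restricted to the elements of valuation $\ge1$. Then check that the induced data on $pG$ is again a splitting with both a multiplier and a splitter divisible by $p$, unless $t$ is small enough to be handled by the displayed polynomial identity.
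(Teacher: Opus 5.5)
\textbf{There is a genuine gap.} The paper does not prove this lemma; it quotes it from the literature. So your attempt has to stand on its own, and as written it does not reach a proof. The step that is supposed to produce the contradiction is exactly the one you leave open as ``the main obstacle'', and neither route you offer for it works.

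\emph{The character step.} For nontrivial characters the layer identities only give homogeneous relations $\sum_i\chi(M^{(i)})\chi(S^{(j-i)})=0$. In particular $\chi(M_1)\chi(S_1)=0$ merely restates $U=M_1S_1$ and holds in every splitting. So these relations alone can never force a quantity to be ``both zero and nonzero''.

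\emph{The periodicity route is circular.} In the configuration you want to refute, take $s,sk\in S_1$ with $1\ne k\in K_1$ and $m\in M$ divisible by $p$. Then $ms=m(sk)$, because $K_1$ fixes every non-unit, and this contradicts uniqueness. So $S_1$, and symmetrically $M_1$, are injective modulo $K_1$. Hence $M_1$ can never be a union of $K_{i_0}$-cosets, and ``proving periodicity'' is the whole contradiction rather than a step towards it. No R\'edei/Haj\'os theorem supplies it either: $U$ is not a $p$-group, and $(p-1)p^{t-1}$ is in general not a Haj\'os order.

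\emph{The fallback induction fails.} Nonzero elements of $pG$ are also represented as $ms$ with only one factor divisible by $p$. So neither restriction to $pG$ nor the map $x\mapsto px$ (which hits each nonzero element of $pG$ exactly $p$ times) yields a splitting of $\mathbb{Z}_{p^{t-1}}$. Also, the projected layer-$0$ tiling occupies no part of $L_{i_0}$; that layer is covered entirely by products of total valuation $i_0$.

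\textbf{What is missing.} You need a way to turn vanishing of character sums into $p$-adic information about the layer sizes; with it your framework closes. Let $M^{(i)}$ be the unit parts of the elements of $M$ of valuation $i$, so $M^{(0)}=M_1$ and $|M^{(i)}|=a_i$; define $S^{(j)}$ similarly. For $1\le k\le t-1$ let $\lambda_k$ be a character of $U\cong\mathbb{Z}_{p-1}\times\mathbb{Z}_{p^{t-1}}$ that is trivial on the first factor and has order $p^k$. It is trivial on $K_j$ iff $j\le t-1-k$; this is where $p$ odd enters.

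(a) Suppose $\lambda_k(M^{(i)})=0$ for all $k$ in a set $\mathcal{D}\subseteq[1,t-1-i]$. Then $\prod_{k\in\mathcal{D}}\Phi_{p^k}(z)$ divides the polynomial recording the image of $M^{(i)}$ in the cyclic group $\mathbb{Z}_{p^{t-1}}/K_i$. Since $\Phi_{p^k}(1)=p$, this gives $p^{|\mathcal{D}|}\mid a_i$.

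(b) For $j\le t-1-k$ the layer-$j$ identities give $\sum_i\lambda_k(M^{(i)})\lambda_k(S^{(j-i)})=0$. Let $\mu_k$ (resp. $\nu_k$) be the least index $\le t-1-k$ at which $\lambda_k(M^{(\cdot)})$ (resp. $\lambda_k(S^{(\cdot)})$) is nonzero, or $\infty$ if there is none. Then $\mu_k+\nu_k\ge t-k$.

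(c) Thus every $k\in[1,t-1]$ lies in $\mathcal{D}_M=\{k:\lambda_k(M_1)=0\}$ or in $\mathcal{D}_S=\{k:\lambda_k(S_1)=0\}$. Since $v_p(a_0)+v_p(b_0)=t-1$, (a) forces these two sets to partition $[1,t-1]$. Consequently $k\in\mathcal{D}_M$ gives $\nu_k=0$, hence $\lambda_k(M^{(i)})=0$ for all $i\le t-1-k$. So $p^{|\mathcal{D}_M\cap[1,t-1-i]|}\mid a_i$, and symmetrically for $S$.

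(d) The top layer gives $\sum_{i+j=t-1}a_ib_j=p-1$, so some $a_ib_j$ with $i+j=t-1$ is prime to $p$. By (c) this gives $[1,j]\subseteq\mathcal{D}_S$ and $[1,i]\subseteq\mathcal{D}_M$, and disjointness then forces $i=0$ or $j=0$. If $i=0$, then $p\nmid a_0$, so $a_0\mid p-1$. The layer-$i$ count $a_ib_0\le(p-1)p^{t-1-i}$ then gives $a_i\le a_0/p^i<1$ for every $i\ge1$, i.e.\ $M\subseteq U$. If $j=0$, the same argument gives $S\subseteq U$.
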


Suppose that \(S(k)\) splits the cyclic group \(G\) in a purely singular manner. By Theorem~\ref{lem-1}, we may further assume that $\gcd(|G|,6)=1$. Let \(p\) be the smallest prime divisor of \(|G|\), and write \(|G| = p^\alpha m\), where \(\gcd(p,m)=1\) and \(m \ge 1\). Let \(S=\{s_1,\dots,s_n\}\) be a splitter set corresponding to this splitting. Then
\[
p^\alpha m = kn + 1.
\]

For each integer \(i\), we define
\[
G_i = \{ x \in G : p^{i} \parallel \circ(x) \} \quad \text{and} \quad S_i = S \cap G_i.
\]
Here \(\circ(x)\) denotes the order of \(x\) in \(G\), and \(p^{i} \parallel \circ(x)\) means that \(p^i \mid \circ(x)\) but \(p^{i+1} \nmid \circ(x)\).

We first consider the elements of \(G_\alpha\). Counting these elements via the splitting yields
\[
\left(k - \left\lfloor \frac{k}{p} \right\rfloor\right) |S_\alpha|
= p^{\alpha-1}(p-1)m.
\]
It follows that there exist integers $0\leq \beta\leq \alpha -1, d,$ and $m'$ such that 
\[
k - \left\lfloor \frac{k}{p} \right\rfloor = p^\beta d m',
\]
where $d = \gcd\!\left(k - \left\lfloor \frac{k}{p} \right\rfloor,\, p-1\right)$ and \(m' \mid m\).
Consequently, there exist integers \(0 \le l \le l'\), \(1\leq \beta_i \le \alpha_i\), and distinct primes $p_i$ for \(1 \le i \le l'\) such that
\[
m = \prod_{i=1}^{l'} p_i^{\alpha_i}
\quad \text{and} \quad
m' = \prod_{i=1}^{l} p_i^{\beta_i}.
\]

As $(p^{\beta}m')\mid(p^{\alpha}m)$, there exists a unique subgroup \(M'\) of \(G\) of order \(p^{\beta}m'\). Without loss of generality, assume that
\[
S \cap \mathbb{Z}_{p^\alpha m}^{*} = \{s_1, s_2, \dots, s_r\}.
\]
For each \(1 \le i \le r\), define
\[
W_i = \{\, s_i + x : x \in M',\ s_i + x \in \mathbb{Z}_{p^\alpha m}^{*} \,\}.
\]
Then
\[
|W_i| = p^{\beta} \prod_{j=1}^{l} u_j,
\]
where
\[
u_j =
\begin{cases}
p_j^{\beta_j}, & \text{if } \beta_j < \alpha_j, \\[4pt]
p_j^{\beta_j - 1}(p_j - 1), & \text{if } \beta_j = \alpha_j.
\end{cases}
\]
Let \(T = [1,d]\), and define
\[T W_i = \{\, t \omega : 1 \le t \le d,\ \omega\in W_i \,\}.\]
Since \(d<p<p_i\) for all \(1\le i\le l'\), we have
$\{1, 2,\ldots, d\}\subset \mathbb{Z}_{p^{\alpha}m}^{*}$. Consequently,
$|TW_i|=d|W_i|$. Therefore,
\begin{equation}\label{ea}
|TW_i|\geq \left\{\begin{array}{ll} dp^{\beta}+ dp^{\beta}\prod_{j=1}^{\ell} p_j^{\beta_j-1}(p_j-1), &  \mbox{ if } \beta_j<\alpha_j\mbox{ for some } 1\leq j\leq \ell,\\
 dp^{\beta}\prod_{j=1}^{\ell} p_j^{\beta_j-1}(p_j-1), & \mbox{ otherwise.}
 \end{array} \right.\end{equation}

\begin{lemma}\label{lemma--1}
	\begin{itemize}
		\item[(a)] \(TW_i \cap TW_j = \emptyset\) for all distinct \(1 \le i,j \le r\).
		\item[(b)] $\bigcup_{i=1}^{r} TW_i \subseteq \mathbb{Z}_{p^\alpha m}^{*},$
		and  $r|TW_i|\le|\mathbb{Z}_{p^\alpha m}^{*}|.$
	\end{itemize}
\end{lemma}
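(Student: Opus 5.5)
Part (b) is elementary, so I would dispose of it first. By definition every element of $W_i$ lies in $\mathbb{Z}_{p^\alpha m}^{*}$. As already observed, every $t\in T=[1,d]$ is a unit, because $d<p$ and $p$ is the smallest prime divisor of $|G|$. Hence $t\omega\in\mathbb{Z}_{p^\alpha m}^{*}$ for all $t\in T$ and $\omega\in W_i$, which gives $\bigcup_i TW_i\subseteq \mathbb{Z}_{p^\alpha m}^{*}$. The number $|TW_i|=d|W_i|$ computed above does not depend on $i$. So once (a) gives pairwise disjointness, counting the disjoint union inside the unit group yields $r|TW_i|\le |\mathbb{Z}_{p^\alpha m}^{*}|$.

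For (a), I would argue by contradiction. Suppose $t(s_i+x)=t'(s_j+y)$ with $i\ne j$, $t,t'\in[1,d]$ and $x,y\in M'$. Since $M'$ is a subgroup, this gives $ts_i\equiv t's_j \pmod{M'}$. The aim is to show that this congruence contradicts the uniqueness of representation in the splitting $G\setminus\{0\}=S(k)\cdot S$. The natural tool is the divisibility $k-\lfloor k/p\rfloor=p^\beta d m'=d\,|M'|$ coming from the count of $G_\alpha$.

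Concretely, I would fix a unit $s\in S$ and study the set $A\cdot s$, where $A=\{a\in[1,k]: p\nmid a\}$. Every element of $A\cdot s$ lies in $G_\alpha$. The multiples $as$ with $p\mid a$ have $p$-part of order smaller than $p^\alpha$ and so lie outside $G_\alpha$; they can be ignored for this purpose. The elements of $G_\alpha$ are partitioned into the sets $A\cdot s$ with $s\in S_\alpha$, and each such set has size $|A|=d|M'|$. I would try to show, using the uniqueness of the splitting together with the fact that $A$ is closed under multiplication by suitable units, that each $A\cdot s$ is a union of $d$ cosets of $M'$ intersected with $G_\alpha$. Once that is known, the congruence $ts_i\equiv t's_j \pmod {M'}$ puts $t's_j$ into the union of cosets $\bigcup_{a\in A}(as_i+M')$. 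Unless $i=j$, this produces two representations of a single element of $G_\alpha$. The condition $t,t'\le d<p$ keeps $t$ and $t'$ inside $A$, so this really is a clash between two genuine representations.

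The main obstacle is this structural step: showing that the coset $s_i+M'$, or at least its unit part $W_i$, is absorbed by the orbit $A\cdot s_i$, so that the $TW_i$ are controlled by the splitting. The counting identity $|A|=d|M'|$ shows that the sizes are compatible. What remains is to turn this into an actual containment. My first attempt would be to reduce modulo $M'$, working in $G/M'$. There I would compare the images of the sets $A s_i$ and count how many elements of each $M'$-coset can be hit, using the fact that $|A s_i|$ is exactly $d|M'|$ and that the images of the $T s_i$ must be distinct by the counting in $G_\alpha$. If this fails, I would fall back on Lemma~\ref{lemma-1} applied to the Sylow $p$-part $\mathbb{Z}_{p^\alpha}$ to pin down how the multipliers divisible by $p$ interact with $M'$.
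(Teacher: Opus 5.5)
\paragraph{Part (b) is fine; part (a) has a genuine gap.} Your argument for (b) is the paper's. The gap is in (a). The structural step you call the main obstacle says that each orbit $A\cdot s$, with your $A=\{a\in[1,k]:p\nmid a\}$, is a union of $d$ cosets of $M'$, or at least that $W_i\subseteq A\cdot s_i$. This step is not just unproved: it is false in general.

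Take $G=\mathbb{Z}_{25}$, $k=12$, $S=\{1,-1\}$. This is a genuine purely singular splitting with $\gcd(|G|,6)=1$. The parameters are:
\begin{itemize}
\item $p=5$, $\alpha=2$, $m=1$;
\item $k-\lfloor k/p\rfloor=10$ and $d=\gcd(10,4)=2$;
\item $p^\beta m'=5$, so $M'=\{0,5,10,15,20\}$.
\end{itemize}
Then $A\cdot 1=\{1,\dots,12\}\setminus\{5,10\}$, which is not a union of residue classes mod $5$. Also $W_1=\{1,6,11,16,21\}$ is not contained in it, since $16=9\cdot(-1)$ and $21=4\cdot(-1)$ lie in the other orbit. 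Yet $TW_1$ (residues $1,2$ mod $5$) and $TW_2$ (residues $3,4$ mod $5$) are disjoint. So the lemma holds here, but not because the $TW_i$ are absorbed into the orbits $A\cdot s_i$. Neither of your fallbacks (counting in $G/M'$, or invoking Lemma~\ref{lemma-1}) is developed into an argument.

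\paragraph{The missing idea.} Kill $M'$ by multiplying by an integer, rather than passing to the quotient.
\begin{itemize}
\item From $t(s_i+x)=t'(s_j+y)$, multiply by $|M'|=p^\beta m'$. Since $|M'|x=|M'|y=0$, this gives $(p^\beta m' t)s_i=(p^\beta m' t')s_j$.
\item Because $t,t'\le d$, we have $p^\beta m' t,\;p^\beta m' t'\le p^\beta d m'=k-\lfloor k/p\rfloor\le k$. So both multipliers lie in $S(k)$.
\item The common value is nonzero: $s_i$ is a unit of order $p^\alpha m$, while $p^\alpha\nmid p^\beta m' t$ because $\beta<\alpha$, $p\nmid m'$ and $p\nmid t$.
\item Uniqueness of representation in the splitting then forces $s_i=s_j$, i.e.\ $i=j$.
\end{itemize}
This is where the bound $t\le d$ and the identity $k-\lfloor k/p\rfloor=d\,|M'|$ actually enter. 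They are not a size comparison between $A\cdot s$ and unions of cosets. They guarantee that $|M'|\,t$ is still an admissible multiplier.
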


\begin{proof}
	(a) Suppose, to the contrary, that there exist \(t_1,t_2\in T\) and \(x_1,x_2\in M'\) such that
	\[
	t_1(x_1+s_i)=t_2(x_2+s_j).
	\]
	Multiplying both sides by \(p^\beta m'\), we obtain
	\[
	p^\beta m' t_1(x_1+s_i)
	=
	p^\beta m' t_2(x_2+s_j).
	\]
	Since \(p^\beta m' x_1=p^\beta m' x_2=0\) in \(G\), it follows that
	\[
	p^\beta m' t_1 s_i
	=
	p^\beta m' t_2 s_j.
	\]
	Observe that $1\le p^\beta m' t_1,\ p^\beta m' t_2 \le k.$
	Hence the above equality contradicts the uniqueness of representations in the splitting. Therefore, $TW_i \cap TW_j=\emptyset$
	whenever \(i\ne j\).
	
	(b) By the definition of \(W_i\), we have $W_i\subseteq \mathbb{Z}_{p^\alpha m}^{*}.$
	Since both \(T\) and \(W_i\) are subsets of \(\mathbb{Z}_{p^\alpha m}^{*}\), it follows that $\bigcup_{i=1}^{r} TW_i
	\subseteq\mathbb{Z}_{p^\alpha m}^{*}.$
	The inequality $r|TW_i|\le|\mathbb{Z}_{p^\alpha m}^{*}|$ now follows immediately from part~(a).
\end{proof}

Next, define the following subsets of integers:
\begin{align*}
A &= \{x \in [1,k] : \gcd(x, \textstyle\prod_{i=1}^{l} p_i) = 1\},\\
B &= \{x \in [1,\lfloor k/p \rfloor] : \gcd(x, \textstyle\prod_{i=1}^{l} p_i) = 1\},\\
C &= \{x \in [1, k - \lfloor k/p \rfloor] : \gcd(x, \textstyle\prod_{i=1}^{l} p_i) = 1\},\\
D &= \{x \in [1,k] : \gcd(x, p \textstyle\prod_{i=1}^{l} p_i) = 1\},\\
E &= \{x \in [1,k] : \gcd(x, |G|) = 1\}.
\end{align*}

It is immediate to see that $A = (d p^{\beta} m' + B) \cup C$ and $(d p^{\beta} m' + B) \cap C = \emptyset,$
and hence \(|A| = |B| + |C|\).  
Recalling that \(k - \left\lfloor \frac{k}{p} \right\rfloor = p^{\beta} d m'\), we then obtain $|C| = p^{\beta} d \prod_{i=1}^{l} p_i^{\beta_i - 1}(p_i - 1).$ Observe that
\begin{align*}
A \setminus D
&= \{x \in [1,k] : p \mid x,\ \gcd(x/p, \textstyle\prod_{i=1}^{l} p_i) = 1\} \\
&= \{p y : y \in [1,\lfloor k/p \rfloor],\ \gcd(y, \textstyle\prod_{i=1}^{l} p_i) = 1\} \\
&= \{p y : y \in B\}.
\end{align*}
Therefore, $|A \setminus D|=|B|$ and consequently, \[
|D| = |A| - |A \setminus D| = |A| - |B| = |C|
= p^{\beta} d \prod_{i=1}^{l} p_i^{\beta_i - 1}(p_i - 1).
\]
In view of (\ref{ea}) and  the definitions of $D$ and $E$, we deduce that 
\[
|T W_i| =p^{\beta}d \prod_{i=1}^{\ell} p_i^{\beta_i-1}(p_i-1) \ge |D| \ge |E|.
\]
Moreover, note that \( |D|>|E| \) if \(l<l'\), and that $|TW_i|>|D|$  if  $ \beta_j<\alpha_j$ for  some $ 1\leq j\leq \ell$. 
On the other hand, by Lemma~\ref{lemma--1},
\[
|\mathbb{Z}_{p^{\alpha} m}^{*}|
\ge \left| \bigcup_{i=1}^{r} T W_i \right|
\ge r |D|
\ge r |E|=|\mathbb{Z}_{p^{\alpha} m}^{*}|.
\]
It follows that equality must hold throughout. In particular, $|T W_i| = |D| = |E|$. 
By the observations above, we conclude that \(p,p_1,\dots,p_l\) are precisely the prime divisors of \(|G|\), that is, $\ell=\ell'$, and moreover,  
\(\alpha_i = \beta_i\) for all \(i=1,\ldots, \ell\). 
Consequently, \[
|G| = p^{\alpha} \prod_{i=1}^{l} p_i^{\alpha_i},\  k - \left\lfloor \frac{k}{p} \right\rfloor = p^{\beta} d m \mbox{ and } 
|S_\alpha| = p^{\alpha - 1 - \beta} \left(\frac{p - 1}{d}\right).
\]

To complete the proof of Theorem~\ref{mainthm}, we first show that $\beta=\alpha-1.$

\begin{lemma}
\[
k - \left\lfloor \frac{k}{p} \right\rfloor = p^{\alpha-1} d m.
\]
\end{lemma}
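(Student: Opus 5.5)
The plan is to argue by contradiction: assume $\beta\le \alpha-2$, and use the rigidity forced by the equality case just established to build a splitting of a cyclic $p$-group to which Lemma~\ref{lemma-1} applies. From the equality chain we know $\ell=\ell'$, $\alpha_i=\beta_i$, so $M'$ is the subgroup of order $p^{\beta}m$. Moreover the sets $TW_i$ ($1\le i\le r$) partition $\mathbb{Z}_{p^\alpha m}^{*}$, and each $W_i=(s_i+M')\cap\mathbb{Z}_{p^\alpha m}^{*}$. Since $|TW_i|=|D|$ with no slack, each $W_i$ must be the full set of units in the coset $s_i+M'$.

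First, I pass to the quotient $\overline{G}=G/M'\cong\mathbb{Z}_{p^{\gamma}}$ with $\gamma=\alpha-\beta$. Under the assumption $\beta\le\alpha-2$ we have $\gamma\ge 2$. Because each $W_i$ is a union of full fibres of unit elements, the partition of units descends: the sets $T\cdot\overline{s_i}$ partition $\mathbb{Z}_{p^{\gamma}}^{*}$, where $T=[1,d]$. Next I want to extend this to a factorization of all of $\mathbb{Z}_{p^{\gamma}}\setminus\{0\}$. The non-units of $\overline{G}$ are the images of elements $x$ with $p^{\gamma}\nmid\circ(\overline{x})$. For these I will use the original splitting $[1,k]\cdot S$. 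Write each such element of $G$ as $js$ and reduce modulo $M'$, keeping track of the $p$-adic valuation of $j$ and of the order of $s$. The identity $k-\lfloor k/p\rfloor=p^{\beta}dm$ says that, modulo $p^{\beta}m$, the multipliers coprime to $p$ behave like exactly $d$ residues. I intend to use this to show that the natural candidate multiplier set $M_0=T\cup pT'$ (for a suitable set $T'$ of residues), together with the images of $S_{\alpha}\cup S_{\alpha-1}\cup\cdots$, gives a splitting $\mathbb{Z}_{p^{\gamma}}\setminus\{0\}=M_0\cdot\overline{S_0}$.

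Once this splitting of $\mathbb{Z}_{p^{\gamma}}$ is in hand, Lemma~\ref{lemma-1} forces either $M_0$ or $\overline{S_0}$ to consist entirely of elements prime to $p$. On one hand, purely singular means $p\mid j$ for some $j\le k$; since $k\ge p^{\beta}dm\ge p$, the element $p$ itself lies in the multiplier set, so $M_0$ contains a multiple of $p$. On the other hand, since $\gamma\ge2$, the element $p\in\mathbb{Z}_{p^{\gamma}}$ is nonzero. It must be represented as a product in which either the multiplier is divisible by $p$, or the splitter is a non-unit. The counting $|S_\alpha|=p^{\alpha-1-\beta}(p-1)/d$, which is divisible by $p$ when $\beta\le\alpha-2$, should be what rules out the first possibility for all such elements. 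So $\overline{S_0}$ also contains a non-unit. This contradicts Lemma~\ref{lemma-1}, giving $\beta=\alpha-1$ and hence $k-\lfloor k/p\rfloor=p^{\alpha-1}dm$.

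The main obstacle is the middle step: producing a genuine splitting of $\mathbb{Z}_{p^{\gamma}}$, with uniqueness, from the splitting of $G$. Reduction modulo $M'$ can merge representations. A multiplier $j$ that is divisible by some $p_i$ may map an element of $G$ with nontrivial $p_i$-part into something whose image is hard to track. If the quotient construction fails, I would instead try working in the Sylow $p$-subgroup $P\cong\mathbb{Z}_{p^{\alpha}}$ directly, via the projection $x\mapsto m x$. I would use the equality $|D|=|E|$ to control which multipliers are coprime to $m$, and apply Lemma~\ref{lemma-1} to the induced factorization of $P\setminus\{0\}$.
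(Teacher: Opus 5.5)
\textbf{There is a genuine gap, in exactly the step you flag as the main obstacle, and that step carries the entire content of the lemma.} You never produce a splitting of $\mathbb{Z}_{p^{\gamma}}\setminus\{0\}$, and reduction modulo $M'$ gives no mechanism for one.

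Each nonzero class of $G/M'$ has $|M'|=p^{\beta}m$ preimages, and each preimage is uniquely of the form $js$. So the images $j\overline{s}$ cover every nonzero class $p^{\beta}m$ times. To apply Lemma~\ref{lemma-1} you would need one multiplier set $M_0$, the same for every $\overline{s}$, that selects exactly one representation per class. Your $M_0=T\cup pT'$ is neither defined nor shown to give uniqueness.

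The fallback has the same defect. The map $x\mapsto mx$ is $m$-to-one onto the Sylow $p$-subgroup. If you instead keep only the $js$ already in that subgroup, the admissible $j$ are the multiples of the $p'$-part of $\circ(s)$, which varies with $s$. Either way there is no fixed multiplier set.

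What you do prove is that $T\cdot\{\overline{s_1},\dots,\overline{s_r}\}$ partitions $\mathbb{Z}_{p^{\gamma}}^{*}$. This involves only units and is, by itself, no obstruction to $\gamma\ge 2$ (for $d=1$ it merely says the $\overline{s_i}$ run once through the units). The sentence saying the formula for $|S_\alpha|$ ``should'' rule out the remaining case is not an argument. A smaller point: $p^{\beta}dm\ge p$ fails when $m=1$ and $\beta=0$; the fact $p\le k$ comes from pure singularity instead.

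\textbf{The paper avoids quotient splittings entirely.}

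For $m=1$, your ingredients suffice once you apply Lemma~\ref{lemma-1} directly to $G=\mathbb{Z}_{p^{\alpha}}$. Since $p\in[1,k]$, the set $S$ consists of units. Hence $|S_\alpha|=|S|=n$ divides $p^{\alpha}-1$, so $p\nmid|S_\alpha|=p^{\alpha-1-\beta}(p-1)/d$, which forces $\beta=\alpha-1$.

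For $m>1$, the missing idea is a level-by-level count.
\begin{enumerate}
\item Write $k=\sum_i b_ip^i$ in base $p$. Then $p^{\beta}\parallel k-\lfloor k/p\rfloor$ means $b_0=\cdots=b_\beta\ne b_{\beta+1}$.
\item Count $G_{\alpha-u-1}$ through the splitting, multiply by $p^{u+1}$, and compare with the count of $G_\alpha$. This puts the coefficient $\sum_{i=0}^{u}(b_i-b_{i+1})p^i$ in front of $|S_\alpha|$.
\item Inductively this gives $S_{\alpha-1}=\cdots=S_{\alpha-\beta}=\emptyset$.
\item At level $\alpha-\beta-1$, which is available exactly when $\beta\le\alpha-2$, it gives $p^{\beta+1}dm\,|S_{\alpha-\beta-1}|=(b_0-b_{\beta+1})|S_\alpha|$.
\item Since $0<|b_0-b_{\beta+1}|<p<p_1$ and $p_1\mid m$, this forces $p_1\mid|S_\alpha|$. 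That contradicts $d|S_\alpha|=(p-1)p^{\alpha-\beta-1}$.
\end{enumerate}
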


\begin{proof}
Recall that
\[
k - \left\lfloor \frac{k}{p} \right\rfloor = p^{\beta} d m
\]
for some \(\beta \le \alpha - 1\). It remains to show that \(\beta = \alpha - 1\). The case \(\alpha = 1\) is trivial, so we may assume \(\alpha \ge 2\).
Write the base-\(p\) expansion of \(k\) as
\[
k = b_r p^r + \cdots + b_1 p + b_0,
\]
where \(0 \le b_i \le p-1\) and \(b_r \neq 0\). Since \(p^{\beta} \parallel \bigl(k - \left\lfloor \frac{k}{p} \right\rfloor\bigr)\), we have
\[
b_{\beta} = b_{\beta-1} = \cdots = b_1 = b_0,
\]
and if \(r > \beta\), then \(b_{\beta+1} \ne b_{\beta}\).

We claim that
\[
|S_{\alpha-1}| = \cdots = |S_{\alpha-\beta}| = 0.
\]
By counting elements in \(G_{\alpha}\) and \(G_{\alpha-1}\) via the splitting, we obtain
\begin{equation}\label{e1}
\left(k - \left\lfloor \frac{k}{p} \right\rfloor\right) |S_{\alpha}|
= p^{\alpha-1}(p-1)m,
\end{equation}
\begin{equation}\label{e2}
\left(k - \left\lfloor \frac{k}{p} \right\rfloor\right) |S_{\alpha-1}|
+ \left(\left\lfloor \frac{k}{p} \right\rfloor - \left\lfloor \frac{k}{p^2} \right\rfloor\right) |S_{\alpha}|
= p^{\alpha-2}(p-1)m.
\end{equation}
Multiplying \eqref{e2} by \(p\) and comparing with \eqref{e1}, we obtain
\[
p\left(k - \left\lfloor \frac{k}{p} \right\rfloor\right) |S_{\alpha-1}|
= \left(k - \left\lfloor \frac{k}{p} \right\rfloor - p \left\lfloor \frac{k}{p} \right\rfloor + p \left\lfloor \frac{k}{p^2} \right\rfloor\right) |S_{\alpha}|
= (b_0 - b_1) |S_{\alpha}| = 0,
\]
since \(b_0 = b_1\). Hence \(|S_{\alpha-1}| = 0\).

Proceeding inductively, assume that \(|S_{\alpha-1}| = \cdots = |S_{\alpha-u}| = 0\) for some \(u < \beta\). Since $1\leq \alpha-\beta\leq \alpha -u-1$, by considering elements in  \(G_{\alpha-u-1}\), we obtain
\begin{equation}\label{e4}
\left(k - \left\lfloor \frac{k}{p} \right\rfloor\right) |S_{\alpha-u-1}|
+ \left(\left\lfloor \frac{k}{p^{u+1}} \right\rfloor - \left\lfloor \frac{k}{p^{u+2}} \right\rfloor\right) |S_{\alpha}|
= p^{\alpha-u-2}(p-1)m.
\end{equation}
Note that by assumption $|S_{\alpha-1}|= \cdots = |S_{\alpha-u}| = 0$. 
Arguing as before, we obtain
\begin{align*}
p^{u+1}\left(k - \left\lfloor \frac{k}{p} \right\rfloor\right) |S_{\alpha-u-1}|
&= \left(k - \left\lfloor \frac{k}{p} \right\rfloor - p^{u+1} \left\lfloor \frac{k}{p^{u+1}} \right\rfloor + p^{u+1} \left\lfloor \frac{k}{p^{u+2}} \right\rfloor\right) |S_{\alpha}| \\
&= ((b_up^u+\cdots+b_0)-(b_{u+1}p^u+\cdots+b_1))|S_{\alpha}| = 0,
\end{align*}
since \(b_0=b_1=\cdots = b_{u+1}\). Thus \(|S_{\alpha-u-1}| = 0\), proving the claim.

\medskip

\noindent\textbf{Case (1): \(m > 1\).}
In this case, $k>p^{\beta+1}$, and hence \(b_{\beta+1} \ne b_0\). If \(\alpha > \beta+1\), then by considering the elements in \(G_{\alpha-\beta-1}\) and arguing as before, we obtain
\begin{align*}
p^{\beta+1}\left(k - \left\lfloor \frac{k}{p} \right\rfloor\right) |S_{\alpha-\beta-1}|
&= \left(k - \left\lfloor \frac{k}{p} \right\rfloor - p^{\beta+1} \left\lfloor \frac{k}{p^{\beta+1}} \right\rfloor + p^{\beta+1} \left\lfloor \frac{k}{p^{\beta+2}} \right\rfloor\right) |S_{\alpha}| \\
&= p^{\beta} (b_0 - b_{\beta+1}) |S_{\alpha}|.
\end{align*}
Substituting $k - \left\lfloor \frac{k}{p} \right\rfloor=p^{\beta}dm$ into the above identity yields
\[
p^{\beta+1}dm|S_{\alpha-\beta-1}| = (b_0 - b_{\beta+1}) |S_{\alpha}|.
\]
Since \(p < p_1\) and \(1 \le |b_0 - b_{\beta+1}| \le p-1\), it follows that \(p_1 \mid |S_{\alpha}|\). On the other hand,
\[
|S_{\alpha}| d = (p-1) p^{\alpha-\beta-1},
\]
which implies that \(p_1 \nmid |S_{\alpha}|\), a contradiction. Therefore, \(\beta = \alpha - 1\).

\medskip

\noindent\textbf{Case (2): \(m = 1\).}
In this case, \(|G| = p^{\alpha}\), where \(p\) is an odd prime. By Lemma~\ref{lemma-1} and the fact that $p\in \{1,2,\ldots, k\}$, the splitter set \(S\) must consist entirely of elements of order \(p^\alpha\). Hence,  \(|S_{\alpha}| = |S|\). Since $k|S|=kn=p^{\alpha}-1$, we have $p\nmid |S_{\alpha}|$. However, 
$|S_\alpha| = p^{\alpha - 1 - \beta} \left(\frac{p - 1}{d}\right)$, which is impossible unless $\beta=\alpha-1$. This completes the proof.
\end{proof}

\bigskip

\noindent\textit{Proof of Theorem~\ref{mainthm}.}
Recall that \(p^\alpha m = kn + 1\) and
\[
k - \left\lfloor \frac{k}{p} \right\rfloor = p^{\alpha-1} d m.
\]
Since $|G|\neq p$,  either $\alpha\ge 2$ or $m>1$.  In either case, 
\[ k\ge p^{\alpha-1} d m>p>\frac{|G|-1}{k}=n.\]
This completes the proof of Theorem~\ref{mainthm}.

\section*{Acknowledgements}
Ka Hin Leung was partially supported by National Natural Science Foundation of China (Grant No. 12131011).
Tao Zhang is partially supported by National Natural Science Foundation of China (Grant No. 12571357),
Natural Science Basic Research Program of Shaanxi  (Program No. 2025JC-YBMS-048).

\end{document}